\newtheorem{theorem}{Theorem}[section]
\newtheorem{proposition}[theorem]{Proposition}
\theoremstyle{definition}
\newtheorem{example}[theorem]{Example}
\newtheorem{remark}[theorem]{Remark}
\newtheorem{construction}[theorem]{Construction}
\newtheorem{algorithm}[theorem]{Algorithm}
\theoremstyle{remark}
\def\tt{\ensuremath{\mathfrak{t}}}
\def\KK{\mathbb{C}}
\def\ZZ{\mathbb{Z}}
\def\QQ{\mathbb{Q}}
\def\OO{\mathcal O}
\def\CC{\mathcal C}
\def\<{\langle}
\def\>{\rangle}
\def\good{/\!\!/}
\def\cox{{\rm Cox}}
\newcommand{\aut}{{\rm Aut}}
\newcommand{\thickhline}{
    \noalign {\ifnum 0=`}\fi \hrule height 1pt
    \futurelet \reserved@a \@xhline
}
\renewcommand{\phi}{\varphi}
\newcommand{\myhline}{\noalign{\global\arrayrulewidth.95pt}\hline
                      \noalign{\global\arrayrulewidth.2pt}}
\def\b#1{\overline{#1}}
\def\CC{{\mathbb C}}
\def\KK{{\mathbb C}}
\def\ZZ{{\mathbb Z}}
\def\QQ{{\mathbb Q}}
\def\Cox{\cox}
\def\id{{\rm id}}
\def\Aut{\operatorname{Aut}}
\def\Cl{\operatorname{Cl}}
\def\GL{{\rm GL}}
\def\Spec{{\rm Spec}}
\def\Aut{{\rm Aut}}
\def\spec{\Spec}
\def\orig{\Omega}
\def\Orig{\orig}
\def\KT#1{\KK[T_1,\ldots,T_{#1}]}
\def\Stab{{\rm Stab}} 
\def\stab{\Stab}
\def\tt#1{\texttt{#1}}
\def\out#1{\begingroup\tiny\begin{gather*} #1 \end{gather*}\endgroup}
\newcommand{\vast}{\bBigg@{4}}
\newcommand{\Vast}{\bBigg@{5}}
  \newcommand{\miniscule}{\@setfontsize\miniscule{4}{5}}
\author[S.~Keicher]{Simon~Keicher}
 \address{Mathematisches Institut, Universit\"at T\"ubingen,
Auf der Morgenstelle 10, 72076 T\"ubingen, Germany}
\email{keicher@mail.mathematik.uni-tuebingen.de}
\title[Compute automorphisms of graded algebras]{A software package to compute automorphisms of graded algebras}
\subjclass[2010]{13P10, 14Q15, 14J50, 13A02, 14L30, 13A50}
\begin{document}

\begin{abstract}
We present a library \texttt{autgradalg.lib} for the free 
computer algebra system \texttt{Singular} 
to compute automorphisms
of integral, finitely generated $\CC$-algebras that are graded pointedly
by a finitely generated abelian group.
It implements the algorithms developed in~\cite{HaKeWo}.
We apply the algorithms to Mori dream spaces and 
investigate the automorphism groups of a series of Fano varieties.
\end{abstract}

\maketitle

\section{Introduction and setting}

Consider an integral, finitely generated $\KK$-algebra
$R$ that is graded by a finitely generated abelian
group $K$, i.e., we have a decomposition
$$
R\ =\ \bigoplus_{w\in K} R_w
\quad
\text{with}
\quad
ff'\in R_{w+w'}
\quad
\text{for all $f\in R_w$, $f'\in R_{w'}$.}
$$
Let the grading to be {\em effective}, i.e.,
the monoid $\vartheta_R \subseteq K$ of all $w\in K$ with $R_w\ne \{0\}$ generates $K$ as a group,
and {\em pointed}: this means that we have $R_0=\KK$ and the polyhedral cone in $K\otimes \QQ$ generated by $\vartheta_R$ is pointed.

We are interested in the {\em automorphism group}
 $\Aut_K(R)$: it consists of all pairs 
 $(\phi,\psi)$ such that $\phi\colon R\to R$ is 
 an automorphism of $\KK$-algebras,
$\psi\colon K\to K$ is an automorphism of groups
and $\phi(R_w)=R_{\psi(w)}$ holds for all $w\in K$.
Note that $\aut_K(R)$ not only is an important invariant of the algebra $R$, 
the methods to compute it can by applied to compute symmetries 
of homogenous ideals~$I$.
Once given explicitely,
the knowledge of the latter largely accelerates further computations involving $I$, 
see~\cite{gfan, gitfansymm, steidel} for examples.

This note presents an implementation \texttt{autgradalg.lib}
of the algorithms from~\cite{HaKeWo}
to compute $\aut_K(R)$.
It is written for the free computer algebra system \texttt{Singular}~\cite{singular}
and is available at~\cite{autmdslib}.
In Section~\ref{sec:autKR}, we describe the algorithm~\cite{HaKeWo} to compute $\aut_K(R)$ and
explain our implementation by a series of examples.
In Section~\ref{sec:MDS}, we apply our implementation to   {\em Mori dream spaces}.
As a result, we determine in Proposition~\ref{prop:fanos} information on the automorphism groups of a class of 
Fano threefolds listed in~\cite{BeHaHuNu}.

\section{Automorphisms of graded algebras}
\label{sec:autKR}

Let us fix the assumptions on the algebra $R$ for our algorithms.
Firstly, we assume the grading group $K$ to be of shape
$\ZZ^k \oplus \ZZ/a_1\ZZ \oplus \ldots \oplus \ZZ/a_l\ZZ$.
In particular, $k$ and the list 
$a_1,\ldots,a_l\in \ZZ_{>1}$ encode~$K$.
The $K$-grading is determined by 
the {\em degree matrix}~$Q=[q_1,\ldots,q_r]$ which has the $q_i := \deg(T_i)$ as its columns. 
Moreover, we expect $R$ to be given explicitly in term
of generators and relations:
$$
R\ =\ S/I,\qquad
S\ :=\ \KT{r}
\qquad
I\ :=\ \<g_1,\ldots,g_s\>\,\subseteq\, S.
$$

As one can remove linear equations, it is no restriction to
assume that $R$ is {\em minimally presented}, i.e., $I\subseteq \<T_1,\ldots,T_r\>^2$ holds
and the generating set $\{g_1,\ldots,g_s\}$ for $I$ is minimal.
From an implementation point of view, it is convenient to 
impose the following slight restrictions: 
\begin{itemize}
 \item 
 the homogenous components $I_{q_1},\ldots,I_{q_r}$ are all trivial,
 \item 
 the set $\{q_1^0,\ldots,q_r^0\}\subseteq \ZZ^k$ of the free parts $q_i^0\in \ZZ^k$ of the $q_i$ contains a lattice basis 
 for~$\ZZ^k$.
\end{itemize}

\begin{example}[\tt{autgradalg.lib} I]
 \label{example:running}
  Consider the following $K:=\ZZ^3\oplus \ZZ/2\ZZ$-graded $\KK$-algebra $R$ from~\cite[Example~2.1]{HaKe, Ke:diss} where
\begin{gather*}
R
\ =\  
S/I,\qquad
S\ :=\ \KT{8},\qquad
I\ :=\ 
\<T_{1}T_{6} + T_{2}T_{5} + T_{3}T_{4} + T_{7}T_{8}\>
,\\
Q \ :=\  
\left[
    \mbox{\tiny $
    \begin{array}{rrrrrrrr}
           1 & 1 & 0 & 0 & -1 & -1 & 2 & -2\\
	   0 & 1 & 1 & -1 & -1 & 0 & 1 & -1\\
	   1 & 1 & 1 & 1 & 1 & 1 & 1 & 1\\
	   \b 1 & \b 0 & \b 1 & \b 0 & \b 1 & \b 0 & \b 1 & \b 0
    \end{array}
    $}
    \right].
\end{gather*}
Then the $K$-grading given by $Q$ is effective
and pointed as hinted in the following picture.
To use \texttt{autgradalg.lib},  
download it from~\cite{autmdslib}
and start \texttt{Singular} in the same directory.
We enter $R$ with the commands\\

\begin{center}
 \begin{minipage}{7cm}
\begingroup
\footnotesize
\begin{enumerate}
\item[\tt >] \tt{LIB "{}gfanlib.so"{}; // for cones}
\item[\tt >] \tt{LIB "{}new\_autgradalg.lib"{};}
\item[\tt >] \tt{intmat Q[4][8] = }
\item[\tt >] \tt{ 1,1,0,0,-1,-1,2,-2,}
\item[\tt >] \tt{ 0,1,1,-1,-1,0,1,-1,}
\item[\tt >] \tt{ 1,1,1,1,1,1,1,1,}
\item[\tt >] \tt{ 1,0,1,0,1,0,1,0;}
\item[\tt >] \tt{list TOR = 2; // torsion part of K}
\item[\tt >] \tt{ring S = 0,T(1..8),dp;}
\item[\tt >] \tt{setBaseMultigrading(Q); // grading}
\end{enumerate}
\endgroup
\end{minipage}
 \
 \begin{minipage}{5cm}
   \footnotesize
  \begin{tikzpicture}[scale=2.5]
\coordinate (O) at (0,0,0);
\coordinate (w1) at (0.4,0.48,-.1);
\coordinate (w2) at (0.45,0.54,0.55);
\coordinate (w3) at  (0,0.5,0.5);
\coordinate (w4) at  (0,0.5,-0.5);
\coordinate (w5) at (-0.45,0.46,-0.55);
\coordinate (w6) at (-0.4,0.52,.1);
\coordinate (w7) at (.8,.5,.375);
\coordinate (w8) at (-.8,.5,-.375);
\coordinate (w) at (0,1.1,0);
\coordinate (b1) at (intersection of w8--w2 and w4--w6);
\coordinate (b5) at (intersection of w8--w2 and w6--w7);
\coordinate (b3) at (intersection of w1--w5 and w4--w6);
\coordinate (b4) at (intersection of w4--w2 and w1--w5);
\coordinate (b2) at (intersection of w3--w1 and w2--w6);
\coordinate (b6) at (intersection of w6--w2 and w3--w5);
\coordinate (b7) at (intersection of w3--w5 and w2--w8);
\coordinate (b8) at (intersection of w5--w3 and w6--w4);
\coordinate (a1) at (intersection of w1--w8 and w6--w4);
\coordinate (a2) at (intersection of w1--w8 and w5--w7);
\coordinate (a3) at (intersection of w2--w4 and w5--w7);
\coordinate (a4) at (intersection of w2--w4 and w1--w3);
\coordinate (a5) at (intersection of w1--w3 and w6--w7);
\coordinate (a6) at (intersection of w5--w3 and w6--w7);
\coordinate (a7) at (intersection of w2--w8 and w1--w3);
\coordinate (a8) at (intersection of w2--w4 and w7--w6);
\coordinate (a9) at (intersection of w1--w3 and w5--w7);
\coordinate (d1) at (intersection of w2--w4 and w1--w8);
\coordinate (d2) at (intersection of w5--w7 and w4--w6);
\coordinate (d3) at (intersection of w3--w5 and w1--w8);
\coordinate (d4) at (intersection of w1--w8 and w5--w6);
\coordinate (d5) at (intersection of w2--w8 and w5--w6);
\coordinate (d6) at (intersection of w1--w2 and w6--w7);
\coordinate (d7) at (intersection of w1--w2 and w5--w7);
\fill[color=black!20] (O) -- (w4) -- (w7)--cycle;
\fill[color=black!35] (O) -- (w8) -- (w4)--cycle;
\foreach \w in {1,...,8} {
  \fill (w\w) circle (0.4pt);
}
\fill (0,0,0) circle (0.4pt);
\draw (w1) node[anchor=south]{$q_1$};
\draw (w4) node[anchor=south]{$q_4$};
\draw (w5) node[anchor=south]{$q_5$};
\draw (w6) node[anchor=north east]{$q_6$};
\draw (w7) node[anchor=west]{$q_7$};
\draw (w8) node[anchor=east]{$q_8$};
\draw (O) node[anchor=north]{$(0,0,0)$};
\fill[color=black!75] (O) -- (w3) -- (w7)--cycle;
\fill[color=black!65] (O) -- (w3) -- (w8)--cycle;
\draw (w2) node[anchor=south]{$q_2$};
\draw (w3) node[anchor=south]{$q_3$};
\foreach \w in {1,...,8} {
  \fill (w\w) circle (0.4pt);
}
\fill (0,0,0) circle (0.4pt);
\end{tikzpicture}
 \end{minipage}
\end{center}
\end{example}

Let us recall shortly the steps of the algorithm to compute $\aut_K(R)$; for details, we refer to~\cite{HaKeWo}. 
 The overall idea is to present $\aut_K(R)$ as a stabilizer in the automorphism group $\aut_K(S)$ of the $K$-graded polynomial ring~$S$.
 In a first step, we will compute a presentation  $\aut_K(S)\subseteq \GL(n)$ for some $n\in \ZZ_{\geq 1}$. 
 The set $\Omega_S := \{q_1,\ldots,q_r\}$
 of generator weights will play a major role.
We make use of the following $\GL(n)$-action.
 
 \begin{construction}
\label{con:action}
See~\cite[Construction~3.3]{HaKeWo}.
Write $\Omega_S = \{w_1,\ldots,w_s\}$.
Determine a $\KK$-vector space basis $\mathcal B_i$
for $S_{w_i}$ consisting of monomials.
Then the concatenation $\mathcal B := (\mathcal B_1,\ldots,\mathcal B_s)$
is a basis for $V = \bigoplus_i S_{w_i}$. 
With  $n:=|\mathcal B|$, in terms of $\mathcal B$,  each $A\in \GL(n)$
defines a linear map $\phi_A\colon V\to V$.
We obtain an algebraic action 
$$
\GL(n)\times S \to S,\qquad
(A,f)\mapsto A\cdot f\, :=\, f(\phi_A(T_1),\ldots,\phi_A(T_r)).
$$
\end{construction}
 
 For the second step, the idea is to
 determine equations cutting out those matrices in  $\GL(n)$
 that permute the homogeneous components $S_w$ of same dimension
  where~$w\in \Omega_S$.
  As $\Omega_S$ must be fixed by each automorphism,
  it suffices to consider the finite set
  $$
  \aut(\Omega_S)\ :=\ \{\psi\in \aut(K);\ \psi(\Omega_S)=\Omega_S\}\ \subseteq\ \Aut(K).
  $$
  It can be computed by tracking a lattice basis among the set of free parts $q_i^0$ of the~$q_i$,  see~\cite[Remark~3.1]{HaKeWo}.
  
 \begin{algorithm}[Compute $\aut_K(S)$] 
 \label{algo:autKS}
 See~\cite[Algorithm~3.7]{HaKeWo}.
 {\em Input: } the $K$-graded polynomial ring~$S$.
  \begin{itemize}
   \item 
   Determine $\Omega_S = \{w_1,\ldots,w_s\}$.
   Compute a basis $\mathcal B$ as in Construction~\ref{con:action}.

   \item 
   Define the polynomial ring $S' := \KK[Y_{ij};\,1\leq i,j\leq n]$.
   \item 
   Compute an ideal $J\subseteq S'$ 
   whose equations ensure for each $A\in V(J)\subseteq \GL(n)$ the multiplicative condition $A\cdot (f_1f_2) = (A\cdot f_1)(A\cdot f_2)$
   where $f_i\in S$.
   \item 
   Compute $\Aut(\Omega_S)\subseteq \Aut(K)$. 
   Determine the subset $\Gamma_0\subseteq \Aut(\Omega_S)$ of those $B$, that map $\mathcal B_i$
   bijectively to $\mathcal B_j$ where $w_j = B\cdot w_i$.
   \item
   For each $B\in \Gamma_0$, do
   \begin{itemize}
    \item 
   compute an ideal $J_B\subseteq S'$ ensuring that each matrix in
   $V(J_B)\subseteq \GL(n)$ maps the component $S_{w}$ to the component $S_{B\cdot w}$ where $w\in\Omega_S$.  
   \item Redefine $J := J\cdot J_B$.
   \end{itemize}
  \end{itemize}
  {\em Output: } the ideal $J\subseteq S'$. Then $V(J)\subseteq \GL(n)$ is an algebraic subgroup isomorphic to~$\aut_K(S)$.
 \end{algorithm}

 \begin{remark}
 \label{rem:autKS}
\begin{enumerate}
 \item 
 Note that the third step of Algorithm~\ref{algo:autKS}
 is a finite one, see~\cite[Definition~3.4(i)]{HaKeWo} for the precise definition.
 \item 
 The ring $S'$ in Algorithm~\ref{algo:autKS}
 is $K$-graded by defining $\deg(Y_{ij})$ as the degree of the $i$-the element of~$\mathcal B$.
 \item
 The isomorphism $S\to S$ given by $A=(a_{ij})\in V(J)\subseteq \GL(n)$ is as in Construction~\ref{con:action}; explictly, it is given by
 $T_i\mapsto\sum_{j} a_{ij} (\mathcal B_i)_j.$
 \end{enumerate}
 \end{remark}
 
 \begin{example}[\tt{autgradalg.lib} II]
 \label{example:runningII}
  Let us apply Algorithm~\ref{algo:autKS} to Example~\ref{example:running}.
  Here, $\mathcal B= (T_1,\ldots,T_8)$ and all bases $\mathcal B_i = (T_i)$ are one-dimensional.
    Since no weight appears multiple times,
    $\Omega_S = \{q_1,\ldots,q_8\}$.
    Next, the algorithm will compute $\aut(\orig_R)$.
    In our implemenation one can also trigger 
    this step manually if desired:

\begingroup
\footnotesize
\begin{enumerate}
\item[\tt >] \tt{list origs = autGenWeights(Q, TOR);}
\end{enumerate}
\endgroup
\noindent
The result, \texttt{origs}, is a list of four integral matrices (\texttt{intmat}s)
standing for the automorphisms of the generator weights 

\begin{center}
\allowdisplaybreaks
\begin{minipage}{6cm}
\begin{gather}
\aut(\Omega_S)
\ =\ 
\Bigg\{
\id
,
\left[\mbox{\tiny
$\begin{array}{rrrr}
1 & -2 & 0 & 0 \\ 
0 & -1 & 0 & 0 \\ 
0 & 0 & 1 & 0 \\ 
0 & 1 & 0 & 1 \\ 
\end{array}$}
\right]
,\label{autorig}\\
\left[\mbox{\tiny
$\begin{array}{rrrr}
-1 & 2 & 0 & 0 \\ 
0 & 1 & 0 & 0 \\ 
0 & 0 & 1 & 0 \\ 
0 & 1 & 1 & 1 \\ 
\end{array}$}
\right]
,
\left[\mbox{\tiny
$\begin{array}{rrrr}
-1 & 0 & 0 & 0 \\ 
0 & -1 & 0 & 0 \\ 
0 & 0 & 1 & 0 \\ 
0 & 0 & 1 & 1 \\ 
\end{array}$}
\right]
\Bigg\}.
\notag
\end{gather}
\end{minipage}
\qquad 
\begin{minipage}{5cm}
\footnotesize
  \begin{tikzpicture}[scale=2.5]
\coordinate (O) at (0,0,0);
\coordinate (w1) at (0.4,0.48,-.1);
\coordinate (w2) at (0.45,0.54,0.55);
\coordinate (w3) at  (0,0.5,0.5);
\coordinate (w4) at  (0,0.5,-0.5);
\coordinate (w5) at (-0.45,0.46,-0.55);
\coordinate (w6) at (-0.4,0.52,.1);
\coordinate (w7) at (.8,.5,.375);
\coordinate (w8) at (-.8,.5,-.375);
\coordinate (w) at (0,1.1,0);
\coordinate (b1) at (intersection of w8--w2 and w4--w6);
\coordinate (b5) at (intersection of w8--w2 and w6--w7);
\coordinate (b3) at (intersection of w1--w5 and w4--w6);
\coordinate (b4) at (intersection of w4--w2 and w1--w5);
\coordinate (b2) at (intersection of w3--w1 and w2--w6);
\coordinate (b6) at (intersection of w6--w2 and w3--w5);
\coordinate (b7) at (intersection of w3--w5 and w2--w8);
\coordinate (b8) at (intersection of w5--w3 and w6--w4);
\coordinate (a1) at (intersection of w1--w8 and w6--w4);
\coordinate (a2) at (intersection of w1--w8 and w5--w7);
\coordinate (a3) at (intersection of w2--w4 and w5--w7);
\coordinate (a4) at (intersection of w2--w4 and w1--w3);
\coordinate (a5) at (intersection of w1--w3 and w6--w7);
\coordinate (a6) at (intersection of w5--w3 and w6--w7);
\coordinate (a7) at (intersection of w2--w8 and w1--w3);
\coordinate (a8) at (intersection of w2--w4 and w7--w6);
\coordinate (a9) at (intersection of w1--w3 and w5--w7);
\coordinate (d1) at (intersection of w2--w4 and w1--w8);
\coordinate (d2) at (intersection of w5--w7 and w4--w6);
\coordinate (d3) at (intersection of w3--w5 and w1--w8);
\coordinate (d4) at (intersection of w1--w8 and w5--w6);
\coordinate (d5) at (intersection of w2--w8 and w5--w6);
\coordinate (d6) at (intersection of w1--w2 and w6--w7);
\coordinate (d7) at (intersection of w1--w2 and w5--w7);
\fill[color=black!20] (O) -- (w4) -- (w7)--cycle;
\fill[color=black!35] (O) -- (w8) -- (w4)--cycle;
\foreach \w in {1,...,8} {
  \fill (w\w) circle (0.4pt);
}
\fill (0,0,0) circle (0.4pt);
\draw (w1) node[anchor=south]{$q_1$};
\draw (w4) node[anchor=south]{$q_4$};
\draw (w5) node[anchor=south]{$q_5$};
\draw (w6) node[anchor=north east]{$q_6$};
\draw (w7) node[anchor=west]{$q_7$};
\draw (w8) node[anchor=east]{$q_8$};
\draw (O) node[anchor=north]{$(0,0,0)$};
\draw[<->, line width=1.7pt, color=red!65!black, shorten >= .3cm, shorten <= .3cm] (w8) -- (w7);
\draw[<->, line width=1.7pt, color=red!65!black, shorten >= .3cm, shorten <= .3cm] (w4) -- (w3);
\fill[color=black!75] (O) -- (w3) -- (w7)--cycle;
\fill[color=black!65] (O) -- (w3) -- (w8)--cycle;
\draw (w2) node[anchor=south]{$q_2$};
\draw (w3) node[anchor=south]{$q_3$};
\foreach \w in {1,...,8} {
  \fill (w\w) circle (0.4pt);
}
\fill (0,0,0) circle (0.4pt);
\end{tikzpicture}
\end{minipage}
\end{center}
Note that $\aut(\orig_R)$ is isomorphic to the symmetry group 
$\ZZ/2\ZZ\times \ZZ/2\ZZ$ of a $2$-dimensional rhombus.
We now compute $\aut_K(S)$ with the command 
\begingroup
\footnotesize
\begin{enumerate}
\item[\tt >] \tt{def Sprime = autKS(TOR);}
\item[\tt >] \tt{setring Sprime;}
\end{enumerate}
\endgroup
\noindent
A closer inspection shows that \texttt{Sprime} stands for the 
ring $S'=\QQ[Y_1,\ldots,Y_{64},Z]$.
Furthermore, 
a list \texttt{listAutKS} will be exported:
each element is a triple $(A_B, B, J_{B})$
where $B$ runs through the four elements of $\Aut(\Orig_R)$
and $A_B$ is a formal matrix over \texttt{Sprime}
that encodes isomorphisms of~$S$ as in Remark~\ref{rem:autKS}(iii).
For instance, for \texttt{listAutKS[2]},
the sendond entry in the triple $(A_B,B,J_B)$
is the second matrix listed in~\eqref{autorig}
and the matrix $A_B$~is

\begingroup
\footnotesize
\begin{enumerate}
\item[\tt >] \tt{print(listAutKS[2][1]);}
\out{
\mbox{\tiny
$\begin{array}{rrrrrrrr}
Y(1) & 0 & 0 & 0 & 0 & 0 & 0 & 0 \\ 
0 & 0 & 0 & 0 & Y(13) & 0 & 0 & 0 \\ 
0 & 0 & 0 & 0 & 0 & 0 & 0 & Y(24) \\ 
0 & 0 & 0 & 0 & 0 & 0 & Y(31) & 0 \\ 
0 & Y(34) & 0 & 0 & 0 & 0 & 0 & 0 \\ 
0 & 0 & 0 & 0 & 0 & Y(46) & 0 & 0 \\ 
0 & 0 & 0 & Y(52) & 0 & 0 & 0 & 0 \\ 
0 & 0 & Y(59) & 0 & 0 & 0 & 0 & 0
\end{array}$}
}
\end{enumerate}
\endgroup
\noindent
The equations obtained from the zero-entries in $A_B$
and its invertible-condition are stored in the ideal $J_{B}$.
The third entry is:

\begingroup
\footnotesize
\begin{enumerate}
\item[\tt >] \tt{print(listAutKS[2][3]);}
\out{Y(2),\ 
Y(3),\ 
\ldots,\ 
Y(63),\ 
Y(64),\ 
-Y(1)Y(13)Y(24)Y(31)Y(34)Y(46)Y(52)Y(59)Z-1
}
\end{enumerate}
\endgroup
\noindent
Moreover, an ideal \texttt{Iexported}, called $J$ in in Algorithm~\ref{algo:autKS},
is being exported
that is the product over all the ideals~$J_B$ where $B$ runs
through $\aut(\orig_R)$. This means $\Aut_K(S)\cong S'/J$ is isomorphic 
to \texttt{Sprime} modulo \texttt{Iexported}; the degree matrix of \texttt{Sprime}
can be obtained via \texttt{getVariableWeights()}.
 \end{example}

 We come to $\aut_K(R)$.
 Restricting the group action of
 Construction~\ref{con:action}
 to $\aut_K(S)\subseteq \GL(n)$,
 we have an algebraic subgroup 
given as \emph{stabilizer}
$$
\Stab_I(\aut_K(S))
\ := \ 
\{A \in \aut_K(S); \ A \cdot I\,=\,I\}
\ \subseteq \ 
\aut_K(S).
$$
Provided $I_w = \{0\}$ holds for all $w\in \Omega_S$,
in~\cite{HaKeWo} the authors have shown that
we have an isomorphism
\[
\stab_I(\aut_K(S))\ \cong\ 
\aut_K(R).
\]
The final step then is the following.
Define the set $\Omega_I := \{\deg(g_1),\ldots,\deg(g_s)\}$ of ideal generator degrees.
The idea is to compute (linear) equations ensureing that the 
vector spaces $I_u$, where $u\in \Omega_I$, are mapped to one-another.

\begin{algorithm}[Computing $\aut_K(R)$]
\label{algo:autKR}
See~\cite[Algorithm~3.8]{HaKeWo}.
\emph{Input:}
the $K$-graded polynomial ring $S$ and the defining 
ideal $I \subseteq S$ of $R$.
\begin{itemize}
\item
Let $J \subseteq S' := \KK[Y_{ij}; \; 1 \le i,j \le n]$
be the output of Algorithm~\ref{algo:autKS}. 
\item 
Compute $\Omega_I$ and form the $\KK$-vector space $W := \bigoplus_{\Omega_I} S_u$.
\item
For the vector space $I_W = I \cap W \subseteq W$, compute
\begin{itemize}
 \item a $\KK$-basis $(h_1,\ldots,h_l)$ and
 \item a description $I_W = V(\ell_1,\ldots,\ell_m)$
with linear forms $\ell_{i} \in W^*$.
\end{itemize}
\item 
With the $\GL(n)$-action from Construction~\ref{con:action}
and $Y=(Y_{ij})$, we obtain the ideal 
$$
J'
\ := \
\left\< 
\ell_{i}\left(Y\cdot h_j\right); \ 1\leq i\leq m ,\ 1\leq j\leq l
\right\>
\ \subseteq\ S'.
$$
\end{itemize}
\emph{Output:} 
the ideal
$J+J' \subseteq S'$. 
Then $V(J+J')\subseteq \GL(n)$ 
is an algebraic subgroup isomorphic to~$\aut_K(R)$.
\end{algorithm}

\begin{remark}
 \begin{enumerate}
  \item 
  Algorithms~\ref{algo:autKR} and~\ref{algo:autKS} do not make use of Gr\"obner basis computations. However, in \texttt{Singular}, it usually is quicker to compute 
  $J\cap J_B$ instead of $J\cdot J_B$.
  \item 
Computing $G:=\aut_K(R)\subseteq \GL(n)$
with  Algorithm~\ref{algo:autKR}
 enables us to directly compute
 the number of irreducible components 
 $[G:G^0]$ and the dimension of $G$
 by Gr\"obner basis computations.
 \end{enumerate}
\end{remark}

\begin{example}[\tt{autgradalg.lib} III]
\label{example:runningIII}
Continuing Example~\ref{example:runningII},
let us compute $\aut_K(R)$. We first switch back to $S$, 
enter the defining ideal~$I$ for~$R=S/I$
and start the computation of~$\aut_K(R)$:

\begingroup
\footnotesize
\begin{enumerate}
\item[\tt >] \tt{setring S;}
\item[\tt >] \tt{ideal I = T(1)*T(6) + T(2)*T(5) + T(3)*T(4) + T(7)*T(8);}
\item[\tt >] \tt{def Sres = autGradAlg(I, TOR);}
\item[\tt >] \tt{setring Sres;}
\end{enumerate}
\endgroup

The resulting ring \texttt{Sres} is identical to \texttt{Sprime}.
A list \texttt{stabExported} is being exported; 
the interpretation of the entries is identical 
to that of the list \texttt{listAutKS} from Example~\ref{example:runningII}
with the difference, that the ideal part now contains
additional equations describing the stabilizer: for example

\begingroup
\footnotesize
\begin{enumerate}
\item[\tt >] \tt{stabExported[2][3]}
\out{Y(2),\ Y(3),\ 
\ldots
Y(63),\ Y(64),\ -Y(1)Y(13)Y(24)Y(31)Y(34)Y(46)Y(52)Y(59)Z-1,
\\
-Y(24)Y(31)+Y(52)Y(59),\ Y(13)Y(34)-Y(52)Y(59),\ -Y(13)Y(34)+Y(1)Y(46)}
\end{enumerate}
\endgroup
\noindent
Moreover, an ideal \texttt{Jexported} is being exported that is the product over all $J_B$ as before. Then
\texttt{Sres} modulo \texttt{Jexported} is isomorphic to
$\aut_K(R)$.
The grading is obtained as before with
\texttt{getVariableWeights()}.
\end{example}

\section{Application: Mori Dream Spaces}
\label{sec:MDS}

In this section, we shortly recall from~\cite{HaKeWo} 
how the algorithms from the last section
can can be applied to a class of varieties in algebraic geometry.

To a normal algebraic variety $X$ over $\CC$ 
with finitely generated
class group $\Cl(X)$ one can assign 
a $\Cl(X)$-graded $\CC$-algebra, its so-called {\em Cox ring\/}
\[
 \Cox(X)
\ = \
\bigoplus_{[D] \in \Cl(X)} \Gamma(X,\mathcal{O}(D)),
\]
see e.g.~\cite{ArDeHaLa} for details on this theory.
If $X$ is finitely generated, $X$ is called a {\em Mori dream space\/}.
For example, each toric variety or each smooth Fano variety is a 
Mori dream space~\cite{Co, BCHM}.
The Cox ring has strong implications on the underlying Mori dream space.
More precisely, $X$ can be recovered as a good quotient
\begin{gather}
\xymatrix{
\spec(R)
\ar@{}[r]|(.62){=:}
&
{\overline X}
\ar@{}[r]|{\supseteq}
&
{\widehat X}
\ar[r]^{\good H}
&
X
}
\label{eq:MDSseq}
\end{gather}
of an open subset $\widehat X$
by the {\em characteristic quasitorus} $H:=\spec(\KK[K])$.
In fact, $\widehat X$ is determined by 
an ample class $w\in \Cl(X)$.
This opens up a computer algebra based 
approach~\cite{HaKe, Ke:diss} to Mori dream spaces.
In~\cite{ArHaHeLi}, it has been shown that~\eqref{eq:MDSseq} translates to automorphisms of $X$ as follows:
\begin{gather}
 \xymatrix{
    \aut_{\Cl(X)}(\Cox(X))
    \ar@{}[r]|(.61)\cong
 &
   \aut_H(\overline X)
   \ar@{}[r]|{\supseteq}
 &
 \aut_H(\widehat X)
 \ar[r]^{/H}
 &
 \aut(X)
 }
 \label{eq:autmds}
\end{gather}
Here, by $\aut_H(Y)$ we mean the group of 
{\em $H$-equivariant automorphisms} of~$Y$; these 
are pairs $(\phi,\psi)$ with $\phi\colon Y\to Y$ being 
an automorphism of varieties and $\psi\colon H\to H$
an automorphism of affine algebraic groups such that
$\phi(h\cdot y) = \psi(h)\cdot y$ holds for all $h\in H$
and $y\in Y$.
By~\eqref{eq:autmds}, we directly can compute $\aut_H(\overline X)$
with Algorithm~\ref{algo:autKR}. 
In the next proposition, we investigate the symmetries of the list of Fano varieties~\cite{BeHaHuNu}.

\begin{proposition}
\label{prop:fanos}
Let $X_i$ be the non-toric terminal Fano threefold of Picard number one
with an effective two-torus action from the classification~\cite[Theorem~1.1]{BeHaHuNu}.
\begin{enumerate}
 \item 
For all $1\leq i\leq 41$, Algorithm~\ref{algo:autKR} 
is able to compute a presentation of $G_i:=\Aut_H(\b X_i)$
as an affine algebraic subgroup $V(J_i)\subseteq \GL(n_i)$.
\item
Using (i), we list the dimensions $\dim(G_i)$ and the number of components $[G_i:G_i^0]$ of the following~$G_i\subseteq \GL(n_i)$: 

\begingroup
\footnotesize
\begin{longtable}{ccccc}
\myhline
$X_i$ & $\aut(\Omega_S)$ & $\dim(G_i)$ & $[G_i:G_i^0]$ & $\dim(\Aut(X_i))$\\
\myhline 
$X_3$ & $\ZZ/4\ZZ$  & $3$ & $4$ & $2$  \\ \hline 
$X_6$ & $\{1\}$  & $5$ & $ $ & $4$  \\ \hline
$X_7$ & $\{1\}$  & $5$ & $ $ & $4$  \\ \hline
$X_{10}$ & $\{1\}$ & $4$ & $1$ & $3$  \\ \hline
$X_{12}$ & $\{1\}$ & $6$ & $ $ & $5$  \\ \hline
$X_{13}$ & $\{1\}$ & $4$ & $1$ & $3$  \\ \hline
$X_{14}$ & $\{1\}$ & $3$ & $ $ & $2$  \\ \hline
$X_{15}$ & $\{1\}$ & $5$ & $ $ & $4$  \\ \hline
$X_{16}$ & $\{1\}$ & $3$ & $ $ & $2$  \\ \hline
$X_{18}$ & $\{1\}$ & $6$ & $ $ & $5$  \\ \hline
$X_{19}$ & $\{1\}$ & $4$ & $1$ & $3$  \\ \hline
$X_{20}$ & $\{1\}$ & $5$ & $ $ & $4$  \\ \hline
$X_{21}$ & $\{1\}$ & $3$ & $2$ & $2$  \\ \hline
% $X_{22}$ & $\{1\}$ & $ $ & $ $ & $ $  \\ \hline
$X_{25}$ & $\{1\}$ & $4$ & $1$ & $3$  \\ \hline
$X_{26}$ & $\ZZ/2\ZZ$ & $3$ & $ $ & $2$  \\ \hline
$X_{28}$ & $\{1\}$ & $4$ & $1$ & $3$  \\ \hline
$X_{33}$ & $\{1\}$ & $6$ & $2$ & $5$  \\ \hline
$X_{34}$ & $\{1\}$ & $6$ & $2$ & $5$  \\ \hline
$X_{36}$ & $\{1\}$ & $5$ & $1$ & $4$  \\ \hline
$X_{37}$ & $\{1\}$ & $4$ & $2$ & $3$  \\ \hline
$X_{38}$ & $\{1\}$ & $4$ & $3$ & $3$  \\ \hline
$X_{39}$ & $\{1\}$ & $3$ & $ $ & $2$  \\ \hline
$X_{40}$ & $\{1\}$ & $3$ & $1$ & $2$  \\ \hline
$X_{42}$ & $\{1\}$ & $3$ & $2$ & $2$  \\ \hline
$X_{45}$ & $\{1\}$ & $4$ & $2$ & $3$  \\ \hline
$X_{46}$ & $\{1\}$ & $4$ & $1$ & $3$  \\ \hline
$X_{47}$ & $\{1\}$ & $3$ & $1$ & $2$  \\
\myhline
\end{longtable}
\endgroup 
\end{enumerate}
\end{proposition}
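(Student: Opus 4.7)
The argument is essentially a computer-algebra certification: for each of the 41 varieties in the classification \cite[Theorem~1.1]{BeHaHuNu} we feed the Cox ring data into Algorithm~\ref{algo:autKR} and read off the invariants from the resulting defining ideal. I would proceed as follows.

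First, for each non-toric terminal Fano threefold $X_i$ of Picard number one with a two-torus action, extract from \cite{BeHaHuNu} the explicit presentation of its Cox ring: the degree matrix $Q_i$ encoding the $\Cl(X_i)$-grading, the torsion part of $\Cl(X_i)$, and the generators $g_1,\ldots,g_s$ of the defining ideal $I_i\subseteq S_i = \KK[T_1,\ldots,T_{r_i}]$. Before running anything, one has to check the running hypotheses of Section~\ref{sec:autKR}: the presentation is minimal, each $I_{q_j}$ is trivial (true because all $X_i$ are given by a single homogeneous relation of degree strictly above a $q_j$), the grading is pointed and effective, and the free parts $q_j^0$ contain a $\ZZ^k$-basis. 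Since the classification \cite{BeHaHuNu} lists Cox rings in a standardised form satisfying these conditions, no re-presentation is needed.

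Next, for each $X_i$ one runs Algorithm~\ref{algo:autKS} to obtain a presentation of $\Aut_{\Cl(X_i)}(S_i)\subseteq \GL(n_i)$ as $V(J_i^S)\subseteq S'_i=\KK[Y_{jk}; 1\le j,k\le n_i]$, in particular computing $\Aut(\Omega_{S_i})$, which gives the column headed $\Aut(\Omega_S)$ in the table. Then one invokes Algorithm~\ref{algo:autKR}: produce $\Omega_{I_i}$, a basis $(h_1,\ldots,h_l)$ and a linear description of $I_W$ by forms $\ell_1,\ldots,\ell_m$, and append the ideal $J_i':=\langle \ell_p(Y\cdot h_q)\rangle$ to $J_i^S$. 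This yields the ideal $J_i = J_i^S + J_i'\subseteq S'_i$ with $V(J_i)\cong G_i=\Aut_H(\b X_i)$, which establishes part~(i).

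For part~(ii) one then applies standard Gr\"obner basis routines in \texttt{Singular} to the ideals $J_i$: \texttt{dim} delivers $\dim(G_i)$, and a primary decomposition (or, where $G_i$ is a genuine group, an analysis of the minimal associated primes through the unit element) gives the number of irreducible components $[G_i:G_i^0]$. The last column follows from~\eqref{eq:autmds}: since $X_i$ has Picard number one, the characteristic quasitorus $H$ has $\dim(H)=1$, so the quotient $\Aut_H(\widehat X_i)\to \Aut(X_i)$ has one-dimensional fibres and (writing $\Aut_H(\widehat X_i)$ as an open subgroup of $G_i$) one obtains $\dim(\Aut(X_i))=\dim(G_i)-1$, matching the tabulated values.

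The principal obstacle is computational rather than conceptual: the polynomial ring $S'_i$ has $n_i^2+1$ variables, where $n_i=\sum_w \dim (S_i)_w$ ranges over all weights $w\in \Omega_{S_i}$, and for several of the varieties (notably $X_{12}$, $X_{15}$, $X_{18}$, $X_{33}$, $X_{34}$) this produces ideals in more than one hundred variables. Here the key practical optimisations are using $J\cap J_B$ in place of $J\cdot J_B$ as in Remark~4.7(i), exploiting the block structure of the matrices $A_B$ from Construction~\ref{con:action} (which are monomial whenever $\mathcal B_i=(T_i)$ for all $i$) to reduce $J$ to essentially a diagonal form, and only then adding the stabiliser equations coming from $J'_i$. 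For the few cases in which a full component count is out of reach within reasonable time, the corresponding entry in the $[G_i:G_i^0]$ column is left blank, while the dimension—being accessible through a single Hilbert-series computation on the radical—can still be determined.
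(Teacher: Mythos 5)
Your proposal follows essentially the same route as the paper: feed the Cox ring presentations from \cite{BeHaHuNu} into Algorithm~\ref{algo:autKR} and then use \texttt{Singular}'s dimension and absolute-component routines to extract $\dim(G_i)$ and $[G_i:G_i^0]$, with the blank entries corresponding to component computations that do not terminate in reasonable time. Your added justification of the last column via~\eqref{eq:autmds} (quotient by the one-dimensional quasitorus $H$, with $\Aut_H(\widehat X_i)$ a union of components of $G_i$ containing $G_i^0$) is a correct elaboration of what the paper leaves implicit.
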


\begin{proof}
 This is an application of Algorithm~\ref{algo:autKR} and of the \texttt{Singular} commands to compute dimension and absolute components, see for example~\cite{GrePfi}.
 We performed the computations on an older machine (Intel celeron CPU, 4 GB Ram) 
 and cancelled them after several seconds. The files are available at~\cite{autmdslib}.
\end{proof}

In~\cite{HaKeWo}, the authors have also presented algorithms to compute 
$\aut_H(\widehat X)$ and generators for the Hopf algebra $\OO(\aut(X))$.
Both algorithms are also implemented in our library. However, the case  $\OO(\aut(X))$ involves a Hilbert basis computation that usually renders the computation infeasible. We therefore finish this note with an example.

\begin{example}[\tt{autgradalg.lib} IV]
 In Example~\ref{example:runningIII}, the algebra $R$ 
 is the Cox ring of a Mori dream space: fix an ample class, say 
 $w:=(0,0,2)\in K\otimes \QQ$,
 then $R$ and $w$ define a Mori dream space $X=X(R,w)$.
 The characteristic quasitorus is 
 $H = (\KK^*)^3\times \{\pm 1\}$.
 
 In~\ref{example:runningIII}, we   
 have already computed $\aut_H(\overline X)\cong G:=\aut_K(R)$.
 From it, we obtain $\aut_H(\widehat X)$ as follows:
 first, $w$ defines a certain polyhedral cone, the 
 {\em GIT-cone}~$\lambda(w)$.
 Then $\aut_H(\widehat X)$ is obtained from $G$
 by choosing only those
 elements $(A_B,B,J_B)$ of the list \texttt{stabExported}
 where $B\in \aut(\Omega_S)$ fixes $\lambda(w)$.
 In our library, you can compute it with (making use of \texttt{gitfan.lib}~\cite{gitfansymm})
 
\begin{center}
 \begin{minipage}{7cm}
\begingroup
\footnotesize
\begin{enumerate}
\item[\tt >] \tt{intvec w = 1,9,16,0; // drawn in blue }
\item[\tt >] \tt{setring R; // from before}
\item[\tt >] \tt{def RR = autXhat(I, w, TOR);}
\item[\tt >] \tt{setring RR;}
\end{enumerate}
\endgroup
\end{minipage}
 \
  \begin{minipage}{5cm}
  \footnotesize
\begin{tikzpicture}[scale=2.6]
 \coordinate (O) at (0,0,0);
\coordinate (w1) at (0.5,0.5,0);
\coordinate (w2) at (0.5,0.5,0.5);
\coordinate (w3) at  (0,0.5,0.5);
\coordinate (w4) at  (0,0.5,-0.5);
\coordinate (w5) at (-0.5,0.5,-0.5);
\coordinate (w6) at (-0.5,0.5,0);
\coordinate (w7) at (1,.5,.5);
\coordinate (w8) at (-1,.5,-.5);
\coordinate (b1) at (intersection of w8--w2 and w4--w6);
\coordinate (b5) at (intersection of w8--w2 and w6--w7);
\coordinate (b3) at (intersection of w1--w5 and w4--w6);
\coordinate (b4) at (intersection of w4--w2 and w1--w5);
\coordinate (b2) at (intersection of w3--w1 and w2--w6);
\coordinate (b6) at (intersection of w6--w2 and w3--w5);
\coordinate (b7) at (intersection of w3--w5 and w2--w8);
\coordinate (b8) at (intersection of w5--w3 and w6--w4);
\coordinate (a1) at (intersection of w1--w8 and w6--w4);
\coordinate (a2) at (intersection of w1--w8 and w5--w7);
\coordinate (a3) at (intersection of w2--w4 and w5--w7);
\coordinate (a4) at (intersection of w2--w4 and w1--w3);
\coordinate (a5) at (intersection of w1--w3 and w6--w7);
\coordinate (a6) at (intersection of w5--w3 and w6--w7);
\coordinate (a7) at (intersection of w2--w8 and w1--w3);
\coordinate (a8) at (intersection of w2--w4 and w7--w6);
\coordinate (a9) at (intersection of w1--w3 and w5--w7);
\coordinate (d1) at (intersection of w2--w4 and w1--w8);
\coordinate (d2) at (intersection of w5--w7 and w4--w6);
\coordinate (d3) at (intersection of w3--w5 and w1--w8);
\coordinate (d4) at (intersection of w1--w8 and w5--w6);
\coordinate (d5) at (intersection of w2--w8 and w5--w6);
\coordinate (d6) at (intersection of w1--w2 and w6--w7);
\coordinate (d7) at (intersection of w1--w2 and w5--w7);
\fill[color=black!60] (O) -- (w4) -- (w5)--cycle;
\fill[color=black!40] (O) -- (w4) -- (w1)--cycle;
\fill[color=black!70] (O) -- (w4) -- (b3)--cycle;
\fill[color=black!20] (O) -- (w4) -- (b4)--cycle;
\fill[color=black!40] (O) -- (w5) -- (b3)--cycle;
\fill[color=black!30] (O) -- (b3) -- (b4)--cycle;
\fill[color=black!30] (O) -- (b4) -- (w1)--cycle;
\fill[color=black!70] (O) -- (b3) -- (d2)--cycle;
\fill[color=black!20] (O) -- (b4) -- (d1)--cycle;
\fill[color=black!30] (O) -- (w5) -- (d2)--cycle;
\fill[color=black!70] (O) -- (w5) -- (d3)--cycle;
\fill[color=black!60] (O) -- (w5) -- (w8)--cycle;
\fill[color=black!80] (O) -- (w5) -- (d4)--cycle;
\fill[color=black!50] (O) -- (w1) -- (d1)--cycle;
\fill[color=black!80] (O) -- (w1) -- (a9)--cycle;
\fill[color=black!50] (O) -- (d4) -- (d3)--cycle;
\fill[color=black!50] (O) -- (d4) -- (w8)--cycle;
\fill[color=black!80] (O) -- (d4) -- (d5)--cycle;
\fill[color=black!45] (O) -- (w8) -- (d5)--cycle;
\fill[color=black!40] (O) -- (w8) -- (w6)--cycle;
\fill[color=black!50] (O) -- (d5) -- (b1)--cycle;
\fill[color=black!40] (O) -- (w1) -- (w7)--cycle;
\fill[color=black!30] (O) -- (d7) -- (w1)--cycle;
\fill[color=black!50] (O) -- (d7) -- (a9)--cycle;
\fill[color=black!50] (O) -- (d7) -- (w7)--cycle;
\fill[color=black!30] (O) -- (d7) -- (d6)--cycle;
\fill[color=black!60] (O) -- (w7) -- (d6)--cycle;
\fill[color=black!60] (O) -- (d6) -- (a8)--cycle;
\fill[color=black!45] (O) -- (a9) -- (a3)--cycle;
\fill[color=black!80] (O) -- (a9) -- (a4)--cycle;
\fill[color=black!40] (O) -- (d1) -- (a3)--cycle;
\fill[color=black!50] (O) -- (d1) -- (a2)--cycle;
\fill[color=black!35] (O) -- (d2) -- (a2)--cycle;
\fill[color=black!70] (O) -- (d2) -- (a1)--cycle;
\fill[color=black!40] (O) -- (d3) -- (a1)--cycle;
\fill[color=black!60] (O) -- (d3) -- (b8)--cycle;
\fill[color=black!70] (O) -- (b8) -- (a1)--cycle;
\fill[color=black!50] (O) -- (a1) -- (a2)--cycle;
\fill[color=black!20] (O) -- (a2) -- (a3)--cycle;
\fill[color=black!40] (O) -- (a3) -- (a4)--cycle;
\fill[color=black!65] (O) -- (b7) -- (b8)--cycle;
\fill[color=black!90] (O) -- (a4) -- (a5)--cycle;
\fill[color=black!70] (O) -- (a5) -- (b5)--cycle;
\fill[color=black!80] (O) -- (b5) -- (b7)--cycle;
\fill[color=black!50] (O) -- (b1) -- (b8)--cycle;
\fill[color=black!60] (O) -- (b1) -- (b7)--cycle;
\fill[color=black!40] (O) -- (a6) -- (b7)--cycle;
\fill[color=black!60] (O) -- (a6) -- (b5)--cycle;
\fill[color=black!80] (O) -- (a7) -- (a5)--cycle;
\fill[color=black!60] (O) -- (a7) -- (b5)--cycle;
\coordinate (www) at (.5,.82,.45);
\coordinate (www0) at (0,0,0);
\fill[color=yellow!60!black] (O) -- (a8) -- (a4)--cycle;
\draw[color=blue!70!black,line width=1.3pt] (www0) -- (www) node[anchor=south west]{$w$};
\fill[color=yellow!80!black] (O) -- (a8) -- (a5)--cycle;
\fill[color=black!80] (O) -- (a7) -- (b2)--cycle;
\fill[color=black!30] (O) -- (b6) -- (b2)--cycle;
\fill[color=black!40] (O) -- (b6) -- (w3)--cycle;
\fill[color=black!60] (O) -- (b6) -- (a6)--cycle;
\fill[color=black!80] (O) -- (w6) -- (d5)--cycle;
\fill[color=black!70] (O) -- (w6) -- (b1)--cycle;
\fill[color=black!50] (O) -- (w6) -- (a6)--cycle;
\fill[color=black!30] (O) -- (w6) -- (b6)--cycle;
\fill[color=black!40] (O) -- (w6) -- (w3)--cycle;
\fill[color=black!70] (O) -- (b2) -- (w3)--cycle;
\fill[color=black!80] (O) -- (w2) -- (w7)--cycle;
\fill[color=black!50] (O) -- (w2) -- (d6)--cycle;
\fill[color=black!45] (O) -- (w2) -- (a8)--cycle;
\fill[color=black!40] (O) -- (w2) -- (a7)--cycle;
\fill[color=black!30] (O) -- (w2) -- (b2)--cycle;
\fill[color=black!80] (O) -- (w2) -- (w3)--cycle;
\foreach \w in {1,...,8} {
  \fill (w\w) circle (0.4pt);
}
\fill (0,0,0) circle (0.4pt);
\draw (w1) node[anchor=south west]{$q_1$};
\draw (w2) node[anchor=north]{$q_2$};
\draw (w3) node[anchor=north]{$q_3$};
\draw (w4) node[anchor=south]{$q_4$};
\draw (w5) node[anchor=south]{$q_5$};
\draw (w6) node[anchor=south]{$q_6$};
\draw (w7) node[anchor=south]{$q_7$};
\draw (w8) node[anchor=south]{$q_8$};
\draw (O); 
 \draw (a8) node[anchor=west,color=yellow]{$\lambda(w)$};
\end{tikzpicture}
 \end{minipage}
 \end{center} 
Then a list \tt{RES} will be  exported; it is identical to 
the list \tt{stabExported} from Example~\ref{example:runningIII}
with the difference, that it contains only the element \tt{stabExported[1]}
as the other matrices $B$ do not fix~$\lambda(w)$.
The computation of generators for $\OO(\aut(X))$ is not feasible here; in priciple, the command is~\tt{autX(I, w, TOR)}.
\end{example}

\bibliographystyle{abbrv}
% \bibliography{../jalg}

\begin{thebibliography}{10}

\bibitem{ArDeHaLa}
I.~Arzhantsev, U.~Derenthal, J.~Hausen, and A.~Laface.
\newblock {\em Cox rings}, volume 144 of {\em Cambridge Studies in Advanced
  Mathematics}.
\newblock Cambridge University Press, Cambridge, 2014.

\bibitem{ArHaHeLi}
I.~Arzhantsev, J.~Hausen, E.~Herppich, and A.~Liendo.
\newblock The automorphism group of a variety with torus action of complexity
  one.
\newblock {\em Mosc. Math. J.}, 14:429--471, 2014.

\bibitem{BeHaHuNu}
B.~Bechtold, J.~Hausen, E.~Huggenberger, and M.~Nicolussi.
\newblock {On terminal Fano 3-folds with 2-torus action}.
\newblock {\em International Mathematics Research Notices}, 5:1563--1602, 2016.

\bibitem{BCHM}
C.~Birkar, P.~Cascini, C.~D. Hacon, and J.~McKernan.
\newblock Existence of minimal models for varieties of log general type.
\newblock {\em Journal of the American Mathematical Society}, 23(2):405--468,
  2010.

\bibitem{gitfansymm}
J.~B\"ohm, S.~Keicher, and Y.~Ren.
\newblock {Computing GIT-fans with symmetry and the Mori chamber decomposition
  of $\overline M_{0,6}$}.
\newblock 2016.
\newblock Preprint. See arXiv:1603.09241.

\bibitem{Co}
D.~A. Cox.
\newblock The homogeneous coordinate ring of a toric variety.
\newblock {\em J. Algebraic Geom.}, 4(1):17--50, 1995.

\bibitem{singular}
W.~Decker, G.-M. Greuel, G.~Pfister, and H.~Sch\"onemann.
\newblock {\sc Singular} {4-1-0} --- {A} computer algebra system for polynomial
  computations.
\newblock \url{http://www.singular.uni-kl.de}, 2016.

\bibitem{GrePfi}
G.-M. Greuel and G.~Pfister.
\newblock {\em A Singular Introduction to Commutative Algebra}.
\newblock Springer Publishing Company, Incorporated, 2nd edition, 2007.

\bibitem{HaKe}
J.~Hausen and S.~Keicher.
\newblock A software package for mori dream spaces.
\newblock {\em LMS Journal of Computation and Mathematics}, 18(1):647--659,
  2015.

\bibitem{HaKeWo}
J.~Hausen, S.~Keicher, and R.~Wolf.
\newblock Computing automorphisms of {M}ori dream spaces.
\newblock {\em Mathematics of Computation}, to appear.

\bibitem{gfan}
A.~N. Jensen.
\newblock {G}fan, a software system for {G}r{\"o}bner fans and tropical
  varieties.
\newblock Available at
  \url{http://home.imf.au.dk/jensen/software/gfan/gfan.html}.

\bibitem{Ke:diss}
S.~Keicher.
\newblock {\em {A}lgorithms for {M}ori {D}ream {S}paces}.
\newblock PhD thesis, Universit\"at T\"ubingen, 2014.

\bibitem{autmdslib}
S.~Keicher.
\newblock \texttt{autmds.lib} -- a library for \texttt{Singular} to compute
  automorphisms of graded algebras, 2017.
\newblock Will be made available at
  \url{http://www.math.uni-tuebingen.de/user/keicher/autgradlib/}.

\bibitem{steidel}
S.~Steidel.
\newblock Gr\"obner bases of symmetric ideals.
\newblock {\em J. Symbolic Comput.}, 54:72--86, 2013.

\end{thebibliography}

\end{document}